\newtheorem{theorem}{Theorem}[section]
\newtheorem{lemma}[theorem]{Lemma}
\newtheorem{example}[theorem]{Example}
\title{Note on {\it Interacting Langevin Diffusions: Gradient Structure and Ensemble Kalman Sampler} by 
Garbuno-Inigo, Hoffmann, Li and Stuart}
\author{Nikolas N\"usken\thanks{Institute of Mathematics, University of Potsdam, Karl-Liebknecht-Str. 24/25, D-14476 Potsdam, Germany, 
{\tt nuesken@uni-potsdam.de}} \and Sebastian Reich\thanks{
Institute of Mathematics, University of Potsdam, Karl-Liebknecht-Str. 24/25, D-14476 Potsdam, Germany, {\tt sereich@uni-potsdam.de}
}
}
\begin{document}
\maketitle

\begin{abstract}
An interacting system of Langevin dynamics driven particles has been proposed for sampling from a given posterior density
by Garbuno-Inigo, Hoffmann, Li and Stuart in {\it Interacting Langevin Diffusions: Gradient Structure and Ensemble Kalman Sampler}
(arXiv:1903:08866v2). The proposed formulation is primarily studied from a formal mean-field limit perspective, while the theoretical 
behaviour under a finite particle size is left as an open problem.  In this note we demonstrate that the particle-based covariance interaction term 
requires a non-trivial correction. We also show that the corrected dynamics samples exactly from the
desired posterior provided that the empirical covariance matrix of the particle system remains non-singular and the posterior log-density 
satisfies the standard Bakry--{\'E}mery criterion.
\end{abstract}

\noindent
{\bf Keywords:} Langevin dynamics, interacting particle systems, Bayesian inference, gradient flow, multiplicative noise

\section{Introduction}
In \cite{sr:GIHLS19}, the authors propose to evolve an interacting set of particles $U = \{u^{(j)}\}_{j=1}^J$ according to the following system
of stochastic differential equations (SDEs):
\begin{equation} \label{eq:IPS1}
\dot{u}^{(j)} = -C(U)\nabla \Psi_R(u^{(j)}) + \sqrt{2C(U)} \dot{\bf W}^{(j)},
\end{equation}
where $\Psi_R$ is a suitable potential and the ${\bf W}^{(j)}$ are a collection of i.i.d.~standard Brownian motions in the state space
$\mathbb{R}^d$, aiming at approximating the posterior $\pi^\ast \propto \exp(-\Psi_R)$ in the long-time limit. The matrix $C(U)$ is chosen to be the empirical covariance between particles,
\begin{equation}
C(U) = \frac{1}{J} \sum_{k=1}^J (u^{(k)}-\bar{u})(u^{(k)}-\bar u)^{\rm T} \in \mathbb{R}^{d\times d},
\end{equation}
where $\bar u$ denotes the sample mean
\begin{equation}
\bar u = \frac{1}{J}\sum_{j=1}^J u^{(j)}.
\end{equation}
We require $J>d$ in order for $C(U)$ to have full rank generically. But we also note that (\ref{eq:IPS1}) is valid
even for $J\le d$ in which case the dynamics is restricted to a subspace of $\mathbb{R}^d$.
 
Formally taking the large particle limit $J\to \infty$ leads to the mean-field equation
\begin{equation}
\dot{u} = -\mathcal{C}(\rho)\nabla\Psi_R(u) + \sqrt{2\mathcal{C}(\rho)}\dot{W},
\end{equation}
with corresponding nonlinear Fokker--Planck equation 
\begin{equation} \label{eq:FPE1}
\partial_t \rho = \nabla \cdot \left( \rho\, \mathcal{C} (\rho)\nabla \Psi_R  + \rho \,\mathcal{C}(\rho)\nabla \ln \rho \right)
= \nabla \cdot \left( \rho \,\mathcal{C}(\rho) \nabla \frac{\delta E}{\delta \rho}\right)
\end{equation}
in the marginal densities $\rho$ of $u$ at time $t\ge 0$.
Here we have defined the macroscopic mean and covariance
\begin{equation}
m(\rho) = \int_{\mathbb{R}^d} u \rho(u) \,{\rm d}u, \qquad \mathcal{C}(\rho) = 
\int_{\mathbb{R}^d} (u-m(\rho))(u-m(\rho))^{\rm T} \rho(u)\, {\rm d}u
\end{equation}
and the energy functional
\begin{equation}
E(\rho) = \int_{\mathbb{R}^d} \rho(u) \left(\Psi_R(u) + \ln\rho(u)\right) {\rm d}u.
\end{equation}
The Fokker--Planck equation (\ref{eq:FPE1}) and its gradient flow properties are carefully studied in \cite{sr:GIHLS19} and we refer in particular to
Propositions 2, 4 and 7.

\section{Properties of the finite-size particle system}

In this note we wish to demonstrate that similar properties hold already at the level of the finite-size interacting particle system
(\ref{eq:IPS1}) provided one introduces an appropriate correction term. Our first step is to check whether the product measure
\begin{equation} \label{eq:TPDF}
\pi(U) = \prod_{j=1}^J \pi^\ast(u^{(j)})
\end{equation}
is invariant under (\ref{eq:IPS1}). Recall that $\pi^\ast(u)$ denotes 
the canonical measure associated with the potential $\Psi_R$, that is, $\pi^\ast(u) \propto \exp(-\Psi_R(u))$.
We now rewrite (\ref{eq:IPS1}) in the form
\begin{equation} \label{eq:IPS2}
\dot{U} = S(U)\nabla \ln \pi(U) + \sqrt{2S(U)}\dot{\bf W},
\end{equation}
where $S(U) \in \mathbb{R}^{D\times D}$, $D = dJ$, is a block diagonal matrix with $J$ $d\times d$ entries $C(U)$ and ${\bf W}$ denotes
standard $D$-dimensional Brownian motion. The associated Fokker--Planck equation \cite{sr:P14} 
for the time evolution of the joint density $\mu$ of $U$ can be written as
\begin{equation} \label{eq:FPE0}
\partial_t \mu = \nabla \cdot \left( \mu S \nabla \left\{\ln \mu - \ln \pi \right\} + \mu \nabla \cdot S \right),
\end{equation}
where the vector-valued divergence of the matrix $S$ is defined by
\begin{equation}
(\nabla \cdot S)_i = \sum_{j=1}^D \partial_j S_{ij}, \quad i= 1,\ldots, D.
\end{equation}
From \eqref{eq:FPE0} we see immediately that $\nabla \cdot (\pi \nabla \cdot S) = 0$ is a necessary condition for the invariance of $\pi$.
The following lemma addresses this issue for the specific form of the SDE (\ref{eq:IPS1}).

\begin{lemma} \label{lemma1}
It holds that
\begin{equation}
\nabla \cdot S\,(U) = \frac{d+1}{J}\left(U-\bar U\right) \in \mathbb{R}^D,
\end{equation} 
where $\bar U$ denotes the $D$ dimensional vector consisting of $J$ copies of $\bar u \in \mathbb{R}^d$. In particular, since $\nabla \cdot S \neq 0$, $\pi$ is in general not invariant for the dynamics defined by \eqref{eq:IPS1}.
\end{lemma}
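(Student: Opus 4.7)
The plan is to set up explicit coordinates, compute the divergence componentwise using the product rule, and exploit the fact that the mean-centred vectors sum to zero to collapse most of the resulting terms. For the final assertion about non-invariance of $\pi$, I would simply appeal to the necessary condition $\nabla \cdot (\pi \nabla \cdot S) = 0$ already identified from \eqref{eq:FPE0}, and check that a non-zero mean-centred vector field like $(d+1)J^{-1}(U - \bar U)$ cannot be $\pi$-divergence-free for generic $\pi^\ast$.

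Concretely, I would index coordinates of $U \in \mathbb{R}^D$ by pairs $(j,a)$, with $j \in \{1,\ldots,J\}$ labelling the particle and $a \in \{1,\ldots,d\}$ the spatial component, so that $S_{(j,a),(k,b)}(U) = \delta_{jk}\,C_{ab}(U)$ by block-diagonality. The divergence formula then reduces to
\begin{equation}
(\nabla \cdot S)_{(j,a)} = \sum_{b=1}^d \frac{\partial C_{ab}}{\partial u^{(j)}_b}(U),
\end{equation}
so the task is to differentiate $C_{ab}(U) = J^{-1}\sum_k (u^{(k)}_a - \bar u_a)(u^{(k)}_b - \bar u_b)$. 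Using $\partial u^{(k)}_a/\partial u^{(j)}_b = \delta_{jk}\delta_{ab}$ and $\partial \bar u_a/\partial u^{(j)}_b = J^{-1}\delta_{ab}$, the product rule yields a sum of two terms; crucially, each term contains a factor of the form $\sum_k(u^{(k)}-\bar u) = 0$ after one of the $J^{-1}$ pieces is isolated, so the cross terms vanish and one is left with
\begin{equation}
\frac{\partial C_{ab}}{\partial u^{(j)}_b}(U) = \frac{1}{J}\,\delta_{ab}\,(u^{(j)}_b - \bar u_b) + \frac{1}{J}(u^{(j)}_a - \bar u_a).
\end{equation}
Summing over $b$ collapses the first term to $J^{-1}(u^{(j)}_a - \bar u_a)$ and the second to $dJ^{-1}(u^{(j)}_a - \bar u_a)$, giving the claimed factor $(d+1)/J$.

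For the non-invariance, I would substitute the explicit $\nabla \cdot S$ into the necessary condition $\nabla \cdot (\pi \nabla \cdot S) = 0$: this becomes $\sum_{j,a}\partial_{u^{(j)}_a}\bigl(\pi(U)\,(u^{(j)}_a - \bar u_a)\bigr) = 0$. Since $\sum_{j,a}\partial_{u^{(j)}_a}(u^{(j)}_a - \bar u_a) = J d - d = d(J-1)$, the condition reduces to an identity of the form $d(J-1)\pi(U) + \sum_{j,a}(u^{(j)}_a - \bar u_a)\partial_{u^{(j)}_a}\pi(U) = 0$, which manifestly fails for generic product posteriors $\pi = \prod_j \pi^\ast(u^{(j)})$ (e.g.\ any $\pi^\ast$ that is not identically constant).

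The main obstacle is simply the bookkeeping in the product-rule step: one has to be careful to combine the $\delta_{jk}$ and $J^{-1}$ contributions correctly so as to recognise the two zero-sum cancellations, and to keep the free index $a$ distinct from the contracted index $b$. Once that is handled cleanly, the $(d+1)/J$ factor drops out immediately as $1 + d$ coming from the diagonal and trace contributions respectively.
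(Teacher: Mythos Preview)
Your proposal is correct. Both you and the paper compute $\nabla_{u^{(j)}}\cdot C(U)$ directly and obtain $(d+1)J^{-1}(u^{(j)}-\bar u)$; the only difference is in the algebraic organisation. The paper first expands the covariance as $C(U)=J^{-1}\sum_k u^{(k)}(u^{(k)})^{\rm T}-\bar u\,\bar u^{\rm T}$ and then applies the matrix identities $\nabla_u\cdot(uu^{\rm T})=(d+1)u$, $\nabla_u\cdot(uv^{\rm T})=v$, $\nabla_u\cdot(vu^{\rm T})=dv$ for $v$ independent of $u$, whereas you keep the mean-centred form and exploit the zero-sum property $\sum_k(u^{(k)}-\bar u)=0$ componentwise to kill the $J^{-1}$ cross terms. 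Both routes are equally elementary; yours makes the ``$1+d$'' split (one from the diagonal $\delta_{ab}$ contribution, $d$ from the trace over $b$) very transparent, while the paper's identities are slightly more reusable. Your treatment of the non-invariance claim is also more explicit than the paper's, which simply asserts it from $\nabla\cdot S\neq 0$ and supports it afterwards with a one-dimensional Gaussian example.
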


\begin{proof} We note the properties 
\begin{align}
\nabla_{u^{(j)}} \cdot \left( u^{(j)} (u^{(j)})^{\rm T}\right) &= (d+1) u^{(j)},\\
\nabla_{u^{(j)}} \cdot \left( u^{(j)} (u^{(k)})^{\rm T}\right) &= u^{(k)},\\
\nabla_{u^{(j)}} \cdot \left( u^{(k)} (u^{(j)})^{\rm T}\right) &= d u^{(k)},
\end{align}
$k\not=j$, for the divergence operator. Using
\begin{equation}
C(U) = \frac{1}{J} \sum_{k=1}^M u^{(k)} (u^{(k)})^{\rm T} - \bar u \bar u^{\rm T},
\end{equation}
an explicit calculation reveals then that
\begin{align}
\nabla_{u^{(j)}} \cdot C\,(U) &= 
\frac{d+1}{J} (u^{(j)} - \bar u)  \in \mathbb{R}^d
\end{align}
and hence 
\begin{equation}
\nabla \cdot S \,(U) = \frac{d+1}{J}\left(U-\bar U\right) \in \mathbb{R}^D.
\end{equation}

\end{proof}

\begin{example} Let us consider a one-dimensional Gaussian target measure ($d=1$) with variance $b^2$, i.e. the potential is given by $\Psi_R(u) =  u^2/(2b^2)$. Under the approximating assumptions that the equilibrium density associated to \eqref{eq:IPS1} is a product of $J$ Gaussians with variance $\sigma^2$, and that the covariance at equilibrium can be well-approximated by the state-independent stationary covariance (mean-field assumption),  one can derive the  relation 
\begin{equation}
\sigma^2 \approx \frac{J-2}{J} b^2,
\end{equation}	
which we numerically verified for $J=4$. Note that $\sigma^2 < b^2$, i.e.
the dynamics \eqref{eq:IPS1} systematically underestimates the posterior variance. Since $ \sigma^2 \xrightarrow{J \rightarrow \infty} b^2$, this error vanishes in the large-particle regime, in agreement with the results in \cite{sr:GIHLS19}.
\end{example}

The invariance of $\pi$ can be restored for the interacting Langevin diffusion model (\ref{eq:IPS2}) 
if one replaces the drift term $S \nabla \ln \pi$ in (\ref{eq:IPS2}) by $S \nabla \ln \pi + \nabla \cdot S$. More specifically, 
Lemma \ref{lemma1} implies that the corrected dynamics
\begin{equation} \label{eq:IPS1c}
\dot{u}^{(j)} = -C(U)\nabla \Psi_R(u^{(j)}) + \frac{d+1}{J} (u^{(j)}-\bar u) + \sqrt{2C(U)} \dot{\bf W}^{(j)},
\end{equation}
or, written more compactly
\begin{equation} \label{eq:IPS2c}
\dot{U} = S(U)\nabla \ln \pi(U) + \frac{d+1}{J}(U-\bar U) + \sqrt{2S(U)}\dot{\bf W},
\end{equation}
not only samples from the correct target density (\ref{eq:TPDF}) but also 
gives rise to a gradient flow structure for the time evolution of the joint density $\mu$ over all particles at time $t \ge 0$.

\begin{lemma} \label{lemma2}
The Fokker--Planck equation associated to the time evolution of the joint density $\mu$ under the corrected interacting particle 
formulation (\ref{eq:IPS2c})  is of the form
\begin{equation} \label{eq:FPE2}
\partial_t \mu = \nabla \cdot \left( \mu\, S \nabla \left\{ \ln \mu  - \ln \pi \right\} \right)
= \nabla \cdot \left( \mu \,S \nabla \frac{\delta V}{\delta \mu}\right),
\end{equation}
where the energy functional $V$ is the Kullback--Leibler divergence between $\mu$ and $\pi$, given by
\begin{equation}
V(\mu) = \int_{\mathbb{R}^D}  \mu(U) \{\ln \mu (U) - \ln \pi(U) \} \, {\rm d}U.
\end{equation}
\end{lemma}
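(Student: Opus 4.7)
The plan is to start from the general Itô Fokker--Planck equation for the corrected SDE (\ref{eq:IPS2c}) and show that the additional drift $\frac{d+1}{J}(U-\bar U) = \nabla \cdot S$ introduced by the correction exactly cancels the extra term that arises when one commutes the divergences past $S$. More precisely, for an SDE of the generic form $\dot{U}= b(U) + \sqrt{2S(U)}\dot{\mathbf{W}}$ the associated Fokker--Planck equation can be written (as in \eqref{eq:FPE0}) as
\begin{equation*}
\partial_t \mu = -\nabla \cdot(\mu b) + \sum_{i,j=1}^{D}\partial_i\partial_j\bigl(S_{ij}\mu\bigr).
\end{equation*}
I would first expand the double divergence using the product rule, obtaining
\begin{equation*}
\sum_{i,j}\partial_i\partial_j\bigl(S_{ij}\mu\bigr) = \nabla \cdot\bigl(\mu\,\nabla\cdot S\bigr) + \nabla \cdot\bigl(S\nabla \mu\bigr),
\end{equation*}
which together with the previous display yields $\partial_t\mu = \nabla\cdot\!\bigl[\mu(-b + \nabla\cdot S) + S\nabla\mu\bigr]$.

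Next I would substitute the corrected drift $b = S\nabla \ln\pi + \nabla\cdot S$ read off from (\ref{eq:IPS2c}). The two occurrences of $\nabla\cdot S$ cancel identically, leaving
\begin{equation*}
\partial_t \mu = \nabla \cdot\bigl(-\mu S\nabla \ln\pi + S\nabla\mu\bigr) = \nabla\cdot\bigl(\mu S\nabla\{\ln\mu - \ln\pi\}\bigr),
\end{equation*}
where in the second equality I used $S\nabla\mu = \mu\, S\nabla\ln\mu$. This establishes the first identity in (\ref{eq:FPE2}); note that Lemma \ref{lemma1} is invoked only through the explicit form of the correction term, and the derivation itself is independent of the block structure of $S$.

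For the second equality, I would compute the first variation of the Kullback--Leibler functional $V(\mu) = \int \mu(\ln\mu - \ln\pi)\,\mathrm{d}U$, which gives $\frac{\delta V}{\delta \mu}(U) = \ln\mu(U) - \ln\pi(U) + 1$. The additive constant is annihilated by the gradient, so $\nabla\frac{\delta V}{\delta\mu} = \nabla\{\ln\mu - \ln\pi\}$, completing the identification of the right-hand side of (\ref{eq:FPE2}) as a Wasserstein-type gradient flow driven by $V$ with mobility $S$.

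The main obstacle is really a bookkeeping one: carefully tracking how the Itô form of the Fokker--Planck equation produces the spurious drift $\nabla\cdot S$ in \eqref{eq:FPE0}, and making sure that the sign of the correction term added in \eqref{eq:IPS2c} matches so that the two terms cancel rather than reinforce. Everything else is an immediate application of the product rule and the definition of the functional derivative, and no new hypotheses on $\Psi_R$ are required at this stage beyond those already implicit in writing down the SDE.
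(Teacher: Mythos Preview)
Your proposal is correct and follows essentially the same route as the paper: start from the It\^o Fokker--Planck equation with multiplicative noise (which the paper quotes as \eqref{eq:FPE0}), substitute the corrected drift $b = S\nabla\ln\pi + \nabla\cdot S$, and observe that the two occurrences of $\nabla\cdot S$ cancel, with Lemma~\ref{lemma1} serving only to identify the explicit correction $\frac{d+1}{J}(U-\bar U)$ with $\nabla\cdot S$. Your write-up is more explicit than the paper's---you derive the expanded form of the double divergence from scratch and also justify the second equality in \eqref{eq:FPE2} by computing $\frac{\delta V}{\delta\mu}$---but the logic is identical.
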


\begin{proof} Follows from the Fokker--Planck equation (\ref{eq:FPE0}) for the SDE system (\ref{eq:IPS2}), the substitution of
the drift term $S \nabla \ln \pi$ by $S \nabla \ln \pi + \nabla \cdot S$, and Lemma \ref{lemma1}. 
See, for example, \cite{sr:P14} for a derivation of the Fokker--Planck equation for SDEs with multiplicative noise, which leads to
(\ref{eq:FPE0}).
\end{proof}

Hence the Kalman--Wasserstein gradient flow structure also carries over to the corrected finite-size particle system (\ref{eq:IPS2c}).
However, it cannot be guaranteed that $C(U)$ remains strictly positive definite for all times, in general, even under the condition $J>d$. 
This suggests a regularisation of the form
\begin{equation}
C_{\alpha}(U) = \alpha C_0 + (1-\alpha)C(U),
\end{equation}
where $C_0 \in \mathbb{R}^{d \times d}$ is a fixed symmetric strictly positive definite matrix and $\alpha \in (0,1)$ is a parameter. Under the Bakry--{\'E}mery condition \cite{sr:P14} 
stated in Proposition 2 of \cite{sr:GIHLS19} one can then also establish exponential convergence to the equilibrium measure 
for the finite-size particle system (\ref{eq:IPS1c}) with $C(U)$ replaced by $C_\alpha(U)$ and the correction term 
by $(1-\alpha) (d+1) (u^{(j)}-\bar u)/J$. This follows from standard arguments for the classical Fokker--Planck equation
\cite{sr:P14}. See also Proposition 2 of \cite{sr:GIHLS19}. Let us also mention that the convexity requirement on $\Psi_R$ can be relaxed by a perturbation argument due to Holley–-Stroock \cite{sr:P14}. 

The need for a correction term in (\ref{eq:IPS1}) can be avoided by ensuring that the $j$th block entry $C(U)$ in $S(U)$ 
does not depend on the $j$th particle position $u^{(j)}$. This can, for example, be achieved by defining the following modified 
covariance matrices
\begin{equation}
C_{[j]}(U) = \frac{1}{J-1} \sum_{k\not=j} (u^{(k)}-\bar{u}_{[j]})(u^{(k)}-\bar u_{[j]})^{\rm T} \in \mathbb{R}^{d\times d}, \quad j=1,\ldots,J, 
\end{equation}
where $\bar u_{[j]}$ denotes the leave-one-out sample mean
\begin{equation}
\bar u_{[j]} = \frac{1}{J-1}\sum_{k\not=j}^J u^{(k)},
\end{equation}
and by replacing (\ref{eq:IPS1}) with
\begin{equation} \label{eq:IPS3}
\dot{u}^{(j)} = -C_{[j]}(U)\nabla \Psi_R(u^{(j)}) + \sqrt{2C_{[j]}(U)} \dot{\bf W}^{(j)}.
\end{equation}
We note that very similar ideas have been employed, for example, in \cite{sr:LMW18} for second-order Langevin dynamics. However, while
the reformulation (\ref{eq:IPS3}) is appealing, its computational implementation is more demanding due to the need for computing $J$ 
different covariance matrices and their square roots.  On the other hand, if the destabilising
effect of the correction term in (\ref{eq:IPS1c}) leads to numerical difficulties under small or moderate particle sizes, 
then (\ref{eq:IPS3}) could be taken as a starting point for formulating gradient-free formulations in the spirit of the ensemble Kalman 
sampler (EKS) as proposed in \cite{sr:GIHLS19}.

\medskip \medskip

\noindent
{\bf Acknowledgement.} This research has been partially funded by 
Deutsche Forschungsgemeinschaft (DFG) through grants 
CRC 1294 \lq Data Assimilation\rq (project B04) and CRC 1114 \lq Scaling Cascades\rq (project A02) .

%
%
\bibliographystyle{plain}
\bibliography{bib_paper}

\begin{thebibliography}{1}

\bibitem{sr:GIHLS19}
A.~Garbuno-Inigo, F.~Hoffmann, W.~Li, and A.M. Stuart.
\newblock Interacting {L}angevin diffusions: {G}radient structure and ensemble
  {K}alman sampler.
\newblock Technical Report arXiv:1903:08866v2, Caltech, 2019.

\bibitem{sr:LMW18}
B.~Leimkuhler, Ch. Matthews, and J.~Weare.
\newblock Ensemble preconditioning for {M}arkov chain {M}onte {C}arlo
  simulations.
\newblock {\em Stat.~Comput.}, 28:277--290, 2018.

\bibitem{sr:P14}
G.A. Pavliotis.
\newblock {\em Stochastic processes and applications}.
\newblock Springer--Verlag, New York, 2014.

\end{thebibliography}
%
%

\end{document}